\long\def\ignore#1{}
\def\R{{\cal R}}
\newtheorem{theorem}{Theorem}
\newtheorem{lemma}{Lemma}[section]
\newtheorem{corollary}[theorem]{Corollary}
\newtheorem{proposition}[lemma]{Proposition}
\newtheorem{claim}[lemma]{Claim}
\def\qed{\ifvmode\mbox{ }\else\unskip\fi\hskip 1em plus 10fill$\Box$}
\newcommand{\bbox}{\vrule height7pt width4pt depth1pt}
\newcommand{\ceil}[1]{\left\lceil #1 \right\rceil}
\begin{document}
\title{On Partitions of Two-Dimensional Discrete Boxes}

\author{
Eyal Ackerman\thanks{
Department of Mathematics, Physics, and Computer Science, 
University of Haifa at Oranim, Tivon 36006, Israel.
{\tt ackerman@sci.haifa.ac.il}.
}
\and
Rom Pinchasi\thanks{
Mathematics Department,
Technion---Israel Institute of Technology,
Haifa 32000, Israel.
{\tt room@technion.ac.il}. Supported by ISF grant (grant No.\ 1091/21)}
}

\maketitle
\begin{abstract}
Let $A$ and $B$ be finite sets and consider a partition of the \emph{discrete box} $A \times B$ into \emph{sub-boxes} of the form $A' \times B'$ where $A' \subset A$ and $B' \subset B$.
We say that such a partition has the \emph{$(k,\ell)$-piercing} property 
for positive integers $k$ and $\ell$ if for every $a \in A$
the \emph{discrete line} $\{a\} \times B$ intersects at least $k$ sub-boxes and for every $b \in B$ the line $A \times \{b\}$ intersects at least $\ell$ sub-boxes. We show that a partition of $A \times B$ that has the 
$(k, \ell)$-piercing property must consist of at least 
$(k-1)+(\ell-1)+\left\lceil 2\sqrt{(k-1)(\ell-1)} \right\rceil$ sub-boxes. This bound is nearly tight (up to one additive unit) for all values of $k$ and $\ell$
and is tight for infinitely many values of $k$ and $\ell$.

As a corollary we get that the same bound holds for the minimum number of vertices of a graph whose edges can be colored red and blue such that every vertex is part of a red $k$-clique and a blue $\ell$-clique.
\end{abstract}

\section{Introduction}

Consider the following puzzle: Let $k$ be a positive integer and suppose that an axes-parallel rectangle $R$ in the plane is partitioned into $n$ rectangles such that every axis-parallel line that intersects $R$ intersects at least $k$ of these rectangles. Then how small can $n$ be as a function of $k$?

Denote this function by $n(k)$ and observe that $n(1)=1$ and $n(k)=4k-4$ for $k>1$.
Indeed, the two lines that contain the top and bottom sides of $R$ intersect together $2k$ distinct rectangles when $k>1$. Similarly, the two lines that contain the left and right sides of $R$ intersect $2k$ distinct rectangles. There are exactly four rectangles that belong to these two sets --- the ones containing the four corners of $R$ --- hence $n(k) \geq 4k-4$.
To see that this bound is tight consider the example in Figure~\ref{fig:bricks} (taken from~\cite{Bucic}).
\begin{figure}[ht]
	\centering
	\includegraphics[width=6cm]{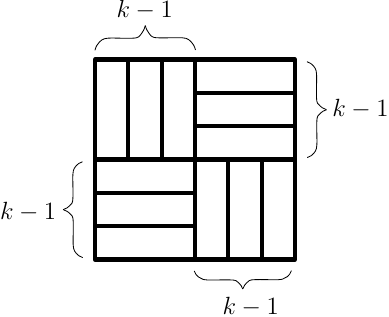}
	\caption{A partition into $4k-4$ rectangles with the $k$-piercing property.}
	\label{fig:bricks}
\end{figure}
For an extension of this problem to three dimensions see~\cite{3D}. 

This puzzle becomes non-trivial when instead of geometric rectangles one considers \emph{discrete boxes}.
A $d$-dimensional \emph{discrete box} $D$ is a set of the form $A_1 \times A_2 \times \ldots \times A_d$ where each $A_i$ is a finite set of size at least two.
A set of the form $A'_1 \times A_2' \times \ldots \times A'_d$ such that $A'_i \subseteq A_i$ for each $i \in [d]$ is called a \emph{sub-box} of $D$.
We say that a family of sub-boxes \emph{partitions} $D$ if every member of $D$ is contained in exactly one sub-box.
A family of sub-boxes has the \emph{$k$-piercing} property if every \emph{discrete line} intersects at least $k$ sub-boxes, where a discrete line is a set of the form $A'_1 \times A_2' \times \ldots \times A'_d$ where for some $i \in [d]$ 
we have $A'_i=A_i$ and for every $j \in [d] \setminus \{i\}$ we have $A'_j=\{a_j\}$ for some $a_j \in A_j$.

Bucic, Lidicky, Long and Wagner~\cite{Bucic} asked for the minimum size of a family of sub-boxes that partitions a $d$-dimensional discrete box and has the $k$-piercing property. They denoted this number by $p_{\rm box}(d,k)$ and showed that $e^{\Omega(\sqrt{d})}k \leq p_{\rm box}(d,k) \leq 15^{d/2}k$.
It follows from a result of Alon, Bohman, Holzman and Kleitman~\cite{Alon} that  $p_{\rm box}(d,2) = 2^d$.
Considering the two-dimensional case, Bucic et al.~\cite{Bucic}  proved that $p_{\rm box}(2,k) \ge (4-o_k(1))k$, observed that $p_{\rm box}(2,k) \leq 4k-4$ (by the example in Figure~\ref{fig:bricks}) and conjectured that the latter is a tight bound.
Their conjecture was settled by Holzman~\cite{Holzman} using a reduction to edge-coloring of graphs that was suggested by Bucic et al.~\cite{Bucic}. Namely, he proved that if the edges of a graph can be two-colored such that every vertex belongs to a monochromatic $k$-clique of each color, then the graph has at least $4k-4$ vertices.
In fact, in his proof Holzman has (implicitly) reduced the problem on edge-colored graphs back to the problem on pierced boxes and thus showed that these two problems are equivalent. Moreover, all the graphs for which the bound is tight were characterized in~\cite{Holzman}.

In this paper we focus on the \emph{asymmetric} two-dimensional case. Namely, we say that a family of sub-boxes that partitions a two-dimensional discrete box $A \times B$ has the \emph{$(k,\ell)$-piercing} property if 
every \emph{row} in $A \times B$ intersects at least $k$ sub-boxes and every \emph{column} in $A \times B$ intersects at least $\ell$ sub-boxes.
By a \emph{row} we mean a discrete line of the form $\{a\} \times B$ for some $ a \in A$ and by a \emph{column} we mean a discrete line of the form $A \times \{b\}$ for some $b \in B$.

It is easy to generalize the above-mentioned arguments and conclude that in the geometric case (that is, where the boxes are actual rectangles pierced by vertical or horizontal lines) the number of boxes is at least $2k+2\ell-4$ and that this bound is tight.
However, as opposed to the symmetric case, in the asymmetric case we get a better bound when considering discrete boxes.

\begin{theorem}\label{thm:main}
	For every $k,\ell \ge 2$, every family of sub-boxes that partitions a discrete box and has the $(k,\ell)$-piercing property contains at least 
$(k-1)+(\ell-1)+\left\lceil 2\sqrt{(k-1)(\ell-1)} \right\rceil$ sub-boxes.
%	Moreover, this bound is nearly sharp for every $k$ and $\ell$ and sharp for an infinite number of distinct values of $k$ and $\ell$.
	Moreover, for every $k,\ell \ge 2$ there is a family of $(k-1)+(\ell-1)+2\left\lceil \sqrt{(k-1)(\ell-1)} \right\rceil$ sub-boxes that partitions a two-dimensional discrete box and has the $(k,\ell)$-piercing property.
\end{theorem}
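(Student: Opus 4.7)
The plan is to prove the lower bound by fixing a reference cell $(a_0,b_0)\in A\times B$, classifying the sub-boxes with respect to it, and applying an AM--GM argument. Let $S_0$ denote the sub-box containing $(a_0,b_0)$. Call the sub-boxes $S\neq S_0$ with $a_0\in A'_S$ the \emph{row-mates} $\mathcal{R}$ (so $|\mathcal{R}|\geq k-1$), the sub-boxes with $b_0\in B'_S$ the \emph{column-mates} $\mathcal{C}$ (so $|\mathcal{C}|\geq\ell-1$), and the remaining ones the \emph{interior} sub-boxes $\mathcal{I}$. Thus $n=1+|\mathcal{R}|+|\mathcal{C}|+|\mathcal{I}|$. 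Two structural facts are central: the column-projections $\{B'_S:S\in\{S_0\}\cup\mathcal{R}\}$ partition $B$ and the row-projections $\{A'_S:S\in\{S_0\}\cup\mathcal{C}\}$ partition $A$; moreover, for every pair $(R,C)\in\mathcal{R}\times\mathcal{C}$, we have $A'_R\cap A'_C=\emptyset$ or $B'_R\cap B'_C=\emptyset$, since otherwise a cell in $(A'_R\cap A'_C)\times(B'_R\cap B'_C)$ would lie in both $R$ and $C$.

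The key step is to find two non-negative integers $\alpha,\beta$ (extracted from the partition data) satisfying $\alpha+\beta\leq n-(k-1)-(\ell-1)$ and $\alpha\beta\geq(k-1)(\ell-1)$. AM--GM then yields $\alpha+\beta\geq 2\sqrt{(k-1)(\ell-1)}$, so $n\geq(k-1)+(\ell-1)+2\sqrt{(k-1)(\ell-1)}$, and integrality rounds this up to the stated ceiling. I expect $\alpha,\beta$ to arise from a double-counting over the $(k-1)(\ell-1)$ pairs $(R,C)\in\mathcal{R}\times\mathcal{C}$: for each such pair the rectangle $A'_C\times B'_R$ is disjoint from $S_0$ and from every other row-mate or column-mate (by the partition-disjointness observations above), so its cells are forced into $R$, $C$, or some interior sub-box in a controlled way. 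Summing over pairs and converting the resulting additive inequality into the product bound will likely require a Cauchy--Schwarz or convexity step that exploits the dichotomy $A'_R\cap A'_C=\emptyset$ vs.\ $B'_R\cap B'_C=\emptyset$.

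For the matching construction, set $u=\lceil\sqrt{(k-1)(\ell-1)}\rceil$ and partition a discrete box of dimensions approximately $(u+1)\times(u+1)$ (tuned slightly to the exact values of $k$ and $\ell$) using a generalized brick pattern: $u$ thick bricks arranged as a staircase on one pair of adjacent sides, and $k-1$ plus $\ell-1$ thin strips lining the other two sides, for a total of $(k-1)+(\ell-1)+2u$ sub-boxes. The piercing property is verified by a direct check that the staircase shifts place every row in exactly $k$ sub-boxes and every column in exactly $\ell$. The main obstacle throughout is extracting the multiplicative bound $\alpha\beta\geq(k-1)(\ell-1)$ in the lower bound: naive double-counting only yields additive or quotient bounds of the form $|\mathcal{I}|\geq(k-1)(\ell-1)/(k+\ell-2)$, which is too weak for highly asymmetric $(k,\ell)$ and misses the square-root structure that the theorem requires.
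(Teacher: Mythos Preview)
Your proposal contains a genuine gap that you yourself flag: you never produce the integers $\alpha,\beta$ with $\alpha\beta\ge(k-1)(\ell-1)$, and there is no indication that the single-reference-cell decomposition can yield such a multiplicative bound. Your structural observations are correct --- the $B$-projections of $\{S_0\}\cup\mathcal{R}$ do partition $B$, and the cells of $A'_C\times B'_R$ do land only in $R$, $C$, or interior boxes --- but this setup uses the piercing hypothesis only along the single row $\{a_0\}\times B$ and the single column $A\times\{b_0\}$. The bound you need is global, and the ``naive double-counting'' you mention is indeed what this local picture gives; there is no evident Cauchy--Schwarz step that converts it into $\alpha\beta\ge(k-1)(\ell-1)$.

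The paper's route is quite different. It takes a minimal counterexample and first argues (through several reductions) that every row and every column contains a \emph{thin} sub-box, and that each thin sub-box can be uniquely assigned to its row or column; this already gives $|\mathcal{R}|\ge m+n$ where $m=|A|$, $n=|B|$. Then a global double-counting over \emph{all} rows and columns (not just one of each) yields $n\ge (k-1)+\tfrac{n}{m}(\ell-1)$ and symmetrically $m\ge(\ell-1)+\tfrac{m}{n}(k-1)$. Adding these gives $m+n\ge(k-1)+(\ell-1)+\tfrac{n}{m}(\ell-1)+\tfrac{m}{n}(k-1)$, and the AM--GM step applies to the last two terms --- so the ``$\alpha,\beta$'' that actually work are $\tfrac{n}{m}(\ell-1)$ and $\tfrac{m}{n}(k-1)$, which satisfy $\alpha\beta=(k-1)(\ell-1)$ \emph{identically}, with no combinatorial argument needed for the product. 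The minimality reductions are what make $m,n$ enter the picture at all.

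Your construction sketch is also off. A box of side roughly $u+1$ cannot work when $k$ and $\ell$ are far apart: the paper takes $A$ of size $(\ell-1)+u$ and $B$ of size $(k-1)+u$, and every sub-box in the construction is \emph{thin} (one horizontal thin box per row, one vertical thin box per column, arranged by a modular shift $i\mapsto\lfloor i\alpha\rfloor$ with $\alpha=\sqrt{(k-1)/(\ell-1)}$). There are no ``thick bricks,'' and the count $m+n=(k-1)+(\ell-1)+2u$ comes directly from the number of rows plus columns.
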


Note that the lower bound and the upper bound in Theorem~\ref{thm:main} differ by at most one unit and they coincide for an infinite number of distinct values of $k$ and $\ell$.
We also remark that our proof differs from the proof in~\cite{Holzman} and is somewhat simpler.
From the reduction mentioned above we immediately get:

\begin{corollary}\label{cor:graph}
	Let $k>1$ and $\ell>1$ be positive integers and let $G$ be a graph whose edges can be colored with red and blue such that every vertex belongs to a red $k$-clique and a blue $\ell$-clique. Then $G$ has at least 
$(k-1)+(\ell-1)+\left\lceil 2\sqrt{(k-1)(\ell-1)} \right\rceil$ vertices. This bound is nearly sharp for every $k$ and $\ell$ and sharp for an infinite number of distinct values of $k$ and $\ell$.
\end{corollary}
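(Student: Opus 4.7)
The plan is to derive the corollary from Theorem~\ref{thm:main} by invoking the reduction of Holzman~\cite{Holzman} that, as the excerpt explains, establishes the equivalence between minimum-vertex red/blue edge-colored graphs with monochromatic $k$- and $\ell$-cliques at every vertex and minimum-size box partitions with the $(k,\ell)$-piercing property. Once that equivalence is in hand, both halves of the statement follow by translating the two parts of Theorem~\ref{thm:main}.

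For the lower-bound direction, given a graph $G$ with the hypothesized red $k$-clique / blue $\ell$-clique property at every vertex, I would first build a partition of a two-dimensional discrete box into exactly $|V(G)|$ sub-boxes with the $(k,\ell)$-piercing property. The natural template is to pick a family $A$ of red cliques and a family $B$ of blue cliques, each large enough that every vertex is covered by a red clique of size $\ge k$ from $A$ and a blue clique of size $\ge \ell$ from $B$, and then to assign to each vertex $v$ the candidate sub-box $S_v = \{R \in A : v \in R\} \times \{C \in B : v \in C\}$. Disjointness of the $S_v$ is automatic (two distinct vertices sharing a red clique in $A$ and a blue clique in $B$ would force the edge between them to be simultaneously red and blue), and the piercing property is immediate once these sub-boxes partition $A \times B$, because the row indexed by $R \in A$ is hit exactly by $\{S_v : v \in R\}$, and symmetrically for columns. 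The delicate step, which is exactly the content of Holzman's reduction, is to choose $A$ and $B$ so that every pair $(R,C) \in A \times B$ also satisfies $R \cap C \neq \emptyset$ (it is then automatically a singleton); this surjectivity is what makes the $S_v$ cover $A \times B$ rather than merely pack into it. With the partition in place, Theorem~\ref{thm:main} immediately yields $|V(G)| \geq (k-1)+(\ell-1)+\lceil 2\sqrt{(k-1)(\ell-1)}\rceil$.

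For the sharpness claim I would run the reduction in reverse. Starting from the explicit family of $(k-1)+(\ell-1)+2\lceil\sqrt{(k-1)(\ell-1)}\rceil$ sub-boxes produced in the second half of Theorem~\ref{thm:main}, take the sub-boxes themselves to be the vertices of a new graph $G$, coloring the edge between two sub-boxes red if they share a row of $A \times B$ and blue if they share a column; a partition cannot do both simultaneously, so this defines a legitimate red/blue edge coloring. Each pierced row then produces a red clique of size at least $k$ containing every sub-box it meets, and each pierced column a blue $\ell$-clique, so every vertex of $G$ lies in monochromatic cliques of both colors and the required sizes. This graph matches the lower bound up to at most one vertex for all $k,\ell$, and exactly whenever $2\lceil\sqrt{(k-1)(\ell-1)}\rceil = \lceil 2\sqrt{(k-1)(\ell-1)}\rceil$ (for instance whenever $(k-1)(\ell-1)$ is a perfect square), giving the infinitely many sharp instances. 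The main obstacle in this whole plan is the surjectivity step in the forward reduction, i.e., arranging the red and blue clique families so that every product pair $(R,C)$ really does contain a common vertex.
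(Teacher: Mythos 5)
Your proposal is correct and takes the same route as the paper, which simply invokes Holzman's reduction together with Theorem~\ref{thm:main}: the lower bound on $|V(G)|$ follows from Holzman's (graph $\to$ box partition) direction plus the first part of the theorem, and the near-sharp examples come from applying the easy (box partition $\to$ graph) direction to the construction in the second part. Your sketches of both directions of the reduction, including the observation that the only nontrivial step is choosing the red and blue clique families so that every red--blue pair meets, are consistent with the paper's terse ``From his reduction we immediately get.''
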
 

\section{Proof of Theorem~\ref{thm:main}}

We prove the first part of Theorem~\ref{thm:main} in Section~\ref{sec:lower} and then describe the construction that proves the second part of the theorem in Section~\ref{sec:upper}.

\subsection{The lower bound}
\label{sec:lower}
Suppose for contradiction that the first part of the theorem is false. That is, there is a non-empty set of counter-examples each of which is a family of sub-boxes that partition a two-dimensional discrete box $A \times B$, has the $(k,\ell)$-property for some $k,\ell \ge 2$ and consists of less than $(k-1)+(\ell-1)+\left\lceil 2\sqrt{(k-1)(\ell-1)} \right\rceil$ sub-boxes.
Among these counter-examples consider the ones with the minimum sum $k+\ell$, among those consider the ones with the minimum sum $|A|+|B|$ and among those let $\cal R$ be a counter-example with the least number of sub-boxes.
Suppose that $\cal R$ consists of less than $(k-1)+(\ell-1)+\left\lceil 2\sqrt{(k-1)(\ell-1)} \right\rceil$ sub-boxes with the $(k,\ell)$-property that partition the discrete box $A \times B$ and
assume without loss of generality that $A=[m]$ and $B=[n]$.

It is easy to verify that at least one of $k$ and $\ell$ must be greater than two and we leave it as a small exercise to the reader in order to get a sense of the problem. Note also that the case $k=\ell=2$ is a direct consequence of the results in~\cite{Alon} and~\cite{Holzman}.

Let $A' \times B'$ be a sub-box. 
If $|A'|=1$ or $|B'|=1$, then we say that $A' \times B'$ is \emph{thin}.
In the former case we call the sub-box
\emph{horizontally thin} whereas in the latter case it is \emph{vertically thin}). 
If $|A'|=|B'|=1$, then we say that $A' \times B'$ is a \emph{singleton}.

\begin{proposition}\label{prop:one}
	Every row and column contains a thin sub-box of $\cal R$.
\end{proposition}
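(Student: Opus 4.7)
The plan is to argue by contradiction using the minimality of $\R$, treating the case of a row (the column case being symmetric). Assume that some row $\{a\} \times B$ contains no horizontally thin sub-box, so that every $A' \times B' \in \R$ with $a \in A'$ satisfies $|A'| \geq 2$. A preliminary observation rules out the degenerate case $|A|=2$: in that case the hypothesis forces every sub-box of $\R$ to be of the form $A \times B'$, whence each column is pierced by exactly one sub-box, contradicting $\ell \geq 2$. So I may assume $|A| \geq 3$.

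The main construction is the partition $\R' := \{(A' \setminus \{a\}) \times B' : A' \times B' \in \R\}$ of the strictly smaller discrete box $(A \setminus \{a\}) \times B$; it is well-defined since $A' \setminus \{a\} \neq \emptyset$ for every sub-box, and $|\R'|=|\R|$. I would then verify the $(k,\ell)$-piercing property of $\R'$: a row $\{a'\} \times B$ with $a' \neq a$ is still pierced by the images of the $\geq k$ sub-boxes that pierced it originally, since $a' \in A'$ implies $a' \in A' \setminus \{a\}$; and a column $(A \setminus \{a\}) \times \{b\}$ is still pierced by the image of every $A' \times B' \in \R$ that pierced $A \times \{b\}$, because the image can miss the shortened column only when $A' = \{a\}$, i.e., when $A' \times B'$ is a horizontally thin sub-box at row $a$, which by hypothesis does not occur. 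Thus all $\geq \ell$ piercings survive.

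The conclusion is that $\R'$ is a counterexample to the theorem with the same $k$, $\ell$, and $|\R|$ but on a strictly smaller ground set. The main obstacle is that the minimality stated in the excerpt orders counterexamples only by $k+\ell$ and then by $|\R|$, which does not on its own exclude $\R'$. The fix is to strengthen the choice of $\R$ to additionally minimize $|A|+|B|$ as a tertiary criterion (equivalently, to run an outer induction on $|A|+|B|$); with this strengthening, the surgery above immediately contradicts the choice of $\R$ and proves the proposition.
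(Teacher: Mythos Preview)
Your argument is correct and follows the paper's approach of deleting the offending row to produce a smaller counterexample; you are also right that the stated minimality (first $k+\ell$, then $|\R|$) does not by itself exclude $\R'$, and your tertiary minimization of $|A|+|B|$ is the standard fix. Indeed the paper's own sentence ``this partition has fewer sub-boxes than $\R$'' is a slip---no sub-box is lost, so the count is unchanged---so your extra care here (including the separate treatment of $|A|=2$) is warranted.
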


\begin{proof}
	Suppose for example that row $i$ does not contain a (horizontally) 
	thin sub-box.
	Then by deleting this row, that is, by removing $i$ from $A$, 
	no sub-box is deleted and we remain with a partition of 
	$(A \setminus \{i\}) \times B$ that still has the $(k,\ell)$ property.
	However, this contradicts the minimality of $m+n$.
	In a similar way we can conclude that there is no column without a thin sub-box.
\end{proof}

It will be convenient to assume that if a row (resp., a column) contains 
several thin sub-boxes, then all of them but possibly one are singletons.
Indeed, suppose for example that there are several thin sub-boxes contained 
in row $i$, say, $\{i\} \times B'_1$, $\{i\} \times B'_2$, \ldots, $\{i\} \times B'_s$. 
Then for every $j=2, \ldots, s$ choose $b'_{j} \in B'_j$
and replace the original thin sub-boxes with the thin sub-boxes 
$\{i\} \times \left(B'_1 \cup \bigcup_{j=2}^s B'_j \setminus \{b'_j\}\right)$, 
$\{i\} \times \{b'_2\}$, \ldots, $\{i\} \times \{b'_s\}$.
Note that this results in another partition of $A \times B$ with the same 
number of sub-boxes and this partition still has the 
$(k,\ell)$-piercing property.
 
\begin{proposition}\label{prop:induction}
	If $k \geq \ell$ (resp., $\ell \geq k$), then every column (resp., row) contains exactly one thin sub-box of $\cal R$.
\end{proposition}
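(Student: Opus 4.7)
The plan is to argue by contradiction. Assume some column $j_0$ contains $s \geq 2$ vertically thin sub-boxes; I derive a contradiction by deleting $j_0$ and invoking the minimality of $\mathcal{R}$ with respect to $k + \ell$. I handle the case $k \geq \ell$; the case $\ell \geq k$ is symmetric by swapping rows with columns.

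Deleting $j_0$ from $B$ produces a partition $\mathcal{R}'$ of $A \times (B \setminus \{j_0\})$: a sub-box $A' \times B'$ vanishes when $B' = \{j_0\}$, loses its $j_0$-entry when $j_0 \in B'$ and $|B'| \geq 2$, and is unchanged otherwise. Only the $s$ thin sub-boxes in column $j_0$ vanish, so $|\mathcal{R}'| = |\mathcal{R}| - s$. Each column $b \neq j_0$ is untouched and still meets at least $\ell$ sub-boxes, while each row $i$ loses at most one sub-box (the sub-box containing $(i, j_0)$, and only when that sub-box is vertically thin), hence meets at least $k - 1$ sub-boxes. So $\mathcal{R}'$ has the $(k-1, \ell)$-piercing property. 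Since the base case $k = \ell = 2$ has been disposed of and $k \geq \ell \geq 2$, we have $k \geq 3$, so $k - 1 + \ell < k + \ell$ with $k - 1, \ell \geq 2$, and minimality gives
\begin{equation*}
|\mathcal{R}'| \;\geq\; (k-2) + (\ell - 1) + \lceil 2\sqrt{(k-2)(\ell-1)} \rceil.
\end{equation*}
Combined with $s \geq 2$ this yields $|\mathcal{R}| \geq (k-1) + (\ell-1) + \bigl(1 + \lceil 2\sqrt{(k-2)(\ell-1)}\rceil\bigr)$.

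The main obstacle is now the arithmetic inequality
\begin{equation*}
\lceil 2\sqrt{(k-1)(\ell-1)} \rceil \;\leq\; 1 + \lceil 2\sqrt{(k-2)(\ell-1)} \rceil,
\end{equation*}
which, together with the previous display, would contradict the counter-example assumption $|\mathcal{R}| < (k-1)+(\ell-1)+\lceil 2\sqrt{(k-1)(\ell-1)} \rceil$. This inequality is precisely where $k \geq \ell$ enters. I would argue in two sub-cases. If $k > \ell$ then $k - 2 \geq \ell - 1$, so $\sqrt{k-1} + \sqrt{k-2} \geq 2\sqrt{\ell-1}$, hence $2\sqrt{(k-1)(\ell-1)} - 2\sqrt{(k-2)(\ell-1)} = 2\sqrt{\ell-1}/(\sqrt{k-1}+\sqrt{k-2}) \leq 1$, from which the ceiling inequality follows at once. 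If $k = \ell$, then $2\sqrt{(k-1)(\ell-1)} = 2(k-1)$ is an integer, while comparing $(k-2)(k-1)$ with $(k-2)^2$ and $(k - 3/2)^2$ shows $2\sqrt{(k-2)(k-1)} \in (2k-4, 2k-3)$, so its ceiling is $2k - 3$ and the two ceilings differ by exactly $1$.
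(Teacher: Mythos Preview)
Your proof is correct and follows essentially the same approach as the paper: delete the offending column, observe that the resulting partition has the $(k-1,\ell)$-piercing property, invoke minimality, and verify the needed ceiling inequality by splitting into the cases $k>\ell$ (rationalize the difference of square roots) and $k=\ell$ (direct computation). Your write-up is in fact slightly more explicit than the paper's in justifying the piercing property of $\mathcal{R}'$ and in handling the $k=\ell$ arithmetic.
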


\begin{proof}
Assume without loss of generality that $k \geq \ell$ and column $j$ contains at least two thin sub-boxes.
By deleting column $j$, that is, by replacing $B$ with $B \setminus \{j\}$ 
we obtain a partition of $A \times (B \setminus \{j\})$ that has the 
$(k-1,\ell)$-piercing property.
Recall that it is known and easy to prove that the lower bound holds for $k=\ell=2$ (as we mention above).
Therefore, we may assume that $k > 2$ and it follows from the minimality of $\cal R$ that $|{\cal R}| \geq 2 + (k-2) + (\ell-1) + \left\lceil 2\sqrt{(k-2)(\ell-1)} \right\rceil$.
Thus, to get a contradiction it remains to show that the following inequality holds:
\begin{equation}\label{eq:1}
\left\lceil 2\sqrt{(k-2)(\ell-1)} \right\rceil \geq 2\sqrt{(k-1)(\ell-1)}-1.
\end{equation}
If $k=\ell$, then~(\ref{eq:1}) holds since $\left\lceil 2\sqrt{(k-2)(k-1)} \right\rceil \geq 2(k-1)-1$ for every integer $k > 2$.
If $k > \ell$, then~(\ref{eq:1}) holds if $2\sqrt{\ell-1}(\sqrt{k-1}-\sqrt{k-2}) \leq 1$.
This inequality indeed holds since we have:
\begin{eqnarray}
2\sqrt{\ell-1}(\sqrt{k-1}-\sqrt{k-2})&=&2\sqrt{\ell-1}(\sqrt{k-1}-\sqrt{k-2})\frac{\sqrt{k-1}+\sqrt{k-2}}{\sqrt{k-1}+\sqrt{k-2}}= \nonumber \\ 
&=&\frac{2\sqrt{\ell-1}}{\sqrt{k-1}+\sqrt{k-2}} \leq \frac{\sqrt{k-1}+\sqrt{k-2}}{\sqrt{k-1}+\sqrt{k-2}} = 1, \nonumber
\end{eqnarray}
where the last inequality holds because $k > \ell$.
\end{proof}

\begin{corollary}\label{cor:1}
	If some row (resp., column) contains more than one thin sub-box, then every column (resp., row) contains exactly one thin sub-box.
\end{corollary}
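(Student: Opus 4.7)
The plan is to derive Corollary~\ref{cor:1} as an immediate consequence of Proposition~\ref{prop:induction}. The proposition already pins down the number of thin sub-boxes in every column whenever $k\ge \ell$, and in every row whenever $\ell\ge k$; the hypothesis of the corollary is precisely what rules out the case that is inconvenient for us. So the approach is really just a short case analysis on the relative size of $k$ and $\ell$.

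In detail, I would argue as follows. Suppose some row of the counter-example $\mathcal R$ contains more than one thin sub-box. First I would dispatch the case $k\ge \ell$: here Proposition~\ref{prop:induction} applies directly and gives that every column contains exactly one thin sub-box, which is the desired conclusion. Then I would handle the remaining case $\ell>k$: in this case the ``resp.'' form of Proposition~\ref{prop:induction} tells us that every row contains exactly one thin sub-box, directly contradicting our hypothesis. Hence this case cannot occur, and the conclusion of the corollary holds in all cases. The symmetric statement (some column has more than one thin sub-box $\Rightarrow$ every row has exactly one) is proved identically by swapping the roles of rows and columns throughout.

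I do not anticipate any real obstacle here. The whole substance of this corollary is already contained in Proposition~\ref{prop:induction}; the only purpose of the corollary is to rephrase that proposition in a form that does not require us to commit in advance to which of $k,\ell$ is larger, which will be convenient in the subsequent part of the argument. The only thing to be slightly careful about is that the statement is framed as an implication rather than a dichotomy, so in the case $k=\ell$ (where Proposition~\ref{prop:induction} forces exactly one thin sub-box in every row \emph{and} in every column) the hypothesis of the corollary is simply vacuous, and the implication holds trivially; my case split above covers this automatically under the clause $k\ge \ell$.
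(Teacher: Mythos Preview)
Your argument is correct and is exactly the intended reasoning: the paper states Corollary~\ref{cor:1} without proof, as an immediate consequence of Proposition~\ref{prop:induction}, and your case split on $k\ge\ell$ versus $\ell>k$ is precisely how one unpacks that. There is nothing to add.
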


\begin{proposition}\label{prop:induction2}
	If a row (resp., column) contains at least two thin sub-boxes, then not all of them are singletons.
\end{proposition}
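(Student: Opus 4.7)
The plan is to contradict the minimality of $\mathcal{R}$ by producing a $(k,\ell)$-piercing partition with strictly fewer than $|\mathcal{R}|$ sub-boxes. Suppose row $i$ contains $s\geq 2$ thin sub-boxes, all singletons, say $\{i\}\times\{j_1\},\ldots,\{i\}\times\{j_s\}$. By Corollary~\ref{cor:1} every column has a unique thin sub-box, and for each $t$ it must be the singleton $\{i\}\times\{j_t\}$.

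I would first argue that row $i$ consists of exactly $k$ sub-boxes: if it had more, merging the two singletons $\{i\}\times\{j_1\}$ and $\{i\}\times\{j_2\}$ into the single sub-box $\{i\}\times\{j_1,j_2\}$ still meets both columns $j_1$ and $j_2$ and leaves row $i$ pierced at least $k$ times, yielding a $(k,\ell)$-piercing partition with $|\mathcal{R}|-1$ sub-boxes and contradicting minimality. Thus row $i$ consists of the $s$ singletons plus $k-s$ non-thin sub-boxes $A^*\times B^*$, each with $i\in A^*$, $|A^*|,|B^*|\geq 2$, and $B^*\cap\{j_1,\ldots,j_s\}=\emptyset$.

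I would then delete row $i$: the $s$ singletons disappear, every other sub-box survives (with $i$ dropped from its row set), and the outcome is a partition of $(A\setminus\{i\})\times B$ with $|\mathcal{R}|-s$ sub-boxes in which each column $j_t$ is pierced exactly $\ell-1$ times while every other row and column keeps its original piercing. To rebuild $(k,\ell)$-piercing as cheaply as possible I would split non-thin sub-boxes along rows: given $A^*\times B^*$ with $i\notin A^*$, replacing it by $\{i'\}\times B^*$ and $(A^*\setminus\{i'\})\times B^*$ for some $i'\in A^*$ costs one extra sub-box and raises by one the piercing of every column in $B^*$. If there is a sub-box $R^*=A^*\times B^*$ with $i\notin A^*$ and $|B^*\cap\{j_1,\ldots,j_s\}|\geq 2$, then one such split repairs two deficient columns at once; handling the remaining $s-2$ deficient columns one at a time I add at most $s-1$ sub-boxes in total, obtaining a $(k,\ell)$-piercing partition with $\leq|\mathcal{R}|-1$ sub-boxes and contradicting minimality.

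The main remaining task is to rule out the case that no such ``shared'' $R^*$ exists, i.e., every sub-box $A'\times B'$ with $i\notin A'$ satisfies $|B'\cap\{j_1,\ldots,j_s\}|\leq 1$. Under this assumption the $\ell-1$ non-thin sub-boxes of each column $j_t$ are pairwise distinct across different $t$'s, giving $s(\ell-1)$ sub-boxes dedicated to the columns $\{j_1,\ldots,j_s\}$, and each row $i'\neq i$ meets $s$ distinct sub-boxes coming from those columns. Using cell-disjointness of the partition, one sees that if $R'=A'\times B'$ has $i\notin A'$ and some $b\in B'\cap(B\setminus\{j_1,\ldots,j_s\})$, then $A'$ must avoid the row set of the row-$i$ non-thin sub-box covering column $b$. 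Combining this constraint with the row and column piercing conditions, $k\geq\ell$ (Proposition~\ref{prop:induction}), $s\geq 2$, and the counter-example bound $|\mathcal{R}|<(k-1)+(\ell-1)+\lceil 2\sqrt{(k-1)(\ell-1)}\rceil$, a double-counting argument should force the row-$i$ non-thin sub-boxes to span all of $A$; but then every non-thin sub-box of column $j_t$ would have $B'\subseteq\{j_1,\ldots,j_s\}$ while still satisfying $|B'\cap\{j_1,\ldots,j_s\}|\leq 1$, making it vertically thin---a contradiction. I expect this concluding numerical argument to be the main technical obstacle.
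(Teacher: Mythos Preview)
Your proposal has a genuine gap: the ``no shared $R^*$'' case is where all the work lies, and you leave it as a sketch that does not close. The claimed double-counting that would force the row-$i$ non-thin sub-boxes to span all of $A$ is not supplied, and even if one grants $\bigcup_r A_r^*=A$, your final inference fails. From ``$A'$ must avoid the $A^*$ of the row-$i$ box covering column $b$'' you can only conclude $B'\cap B^*_r=\emptyset$ for those $r$ with $A'\cap A^*_r\neq\emptyset$; since $A'$ need not meet every $A^*_r$, this does not force $B'\subseteq\{j_1,\ldots,j_s\}$. There is also a smaller hole in the ``shared'' case: splitting $R^*$ along rows requires $|A^*|\ge 2$, which you never check, and $R^*$ could perfectly well be a horizontally thin box $\{i'\}\times B^*$ sitting in some other row. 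More fundamentally, in the no-shared case every repairable sub-box touches at most one deficient column, so your splitting strategy can never save a box---you are forced into an entirely different argument, and you have not provided one.

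The paper's route avoids this bifurcation. Instead of deleting only row $i$ and trying to repair, it deletes row $i$ \emph{and} all the columns $j_1,\ldots,j_s$. The resulting family $\mathcal{R}'$ has the $(k-t,\ell)$-piercing property for some smallest $t\ge 1$; the key observation is that any row losing $t$ boxes lost them entirely inside the deleted columns, and since each such (non-vertically-thin) box spans at least two of the $s$ columns, $t\le s/2$. Moreover the deleted columns contained at least $s+t$ sub-boxes (the $s$ singletons plus those $t$ boxes). Now the minimality of the counterexample in $k+\ell$ gives $|\mathcal{R}'|\ge (k-t-1)+(\ell-1)+2\sqrt{(k-t-1)(\ell-1)}$, and adding back $s+t$ with $2t\le s$ and $k>\ell$ yields $|\mathcal{R}|\ge (k-1)+(\ell-1)+2\sqrt{(k-1)(\ell-1)}$, the desired contradiction. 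The inductive appeal to the $(k-t,\ell)$ bound is precisely what your approach is missing.
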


\begin{proof}
Note that we may permute the rows and columns without breaking the $(k,\ell)$-property,
therefore we may assume without loss of generality that Row $1$ contains several singletons and no other thin sub-boxes and that these singletons are $\{(1,1)\}, \ldots, \{(1,s)\}$, $s \geq 2$.
It follows from Proposition~\ref{prop:induction} and Corollary~\ref{cor:1} that $k > \ell$ and none of the Columns $1,\ldots,s$ contains another thin sub-box. 
Note that Row $1$ intersects exactly $k$ sub-boxes, for otherwise by
deleting Column $1$ we can obtain a (smaller) family of sub-boxes $\cal R'$ with the $(k,\ell)$-piercing property, contradicting the minimality of $\cal R$.
It follows that $k \ge s$ and in fact $k>s$. Indeed, if $k=s$, then $n=s=k$ and each row must intersect only vertically thin sub-boxes. However, each column should contain at most one vertically thin sub-box.
	
Delete Row $1$ and Columns $1,\ldots,s$ and obtain a (smaller) family of sub-boxes $\cal R'$.
It follows from the minimality of $\cal R$  that $\cal R'$ does not have the $(k,\ell)$-piercing property.
Therefore, some (non-thin) boxes that were contained in the union of columns $1,\ldots,s$ were deleted and there is a row that now intersects less than $k$ sub-boxes.
Let $t \ge 1$ be the smallest integer such that $\cal R'$ has the $(k-t,\ell)$-piercing property.
Notice that $t \le s/2$ and that the union of the columns $1,\ldots,s$ contains at least $s+t$ sub-boxes.
Since $k > s \ge 2t \ge t+1$, it follows that $k-t \ge 2$.
Therefore, by the minimality of $\cal R$, we have $|{\cal R'}| \ge (k-t-1)+(\ell-1)+2\sqrt{(k-t-1)(\ell-1)}$.
Thus, $|{\cal R}| \ge s+t+|{\cal R'}| \ge s+(k-1)+(\ell-1)+2\sqrt{(k-t-1)(\ell-1)}$,
which leads to a contradiction if the last expression is at least $(k-1)+(\ell-1)+2\sqrt{(k-1)(\ell-1)}$. 
This happens if $2\sqrt{\ell-1}(\sqrt{k-1}-\sqrt{k-t-1}) \le s$ and indeed:
\begin{eqnarray}
  2\sqrt{\ell-1}(\sqrt{k-1}-\sqrt{k-t-1})=2\sqrt{\ell-1}(\sqrt{k-1}-\sqrt{k-t-1})\frac{\sqrt{k-1}+\sqrt{k-t-1}}{\sqrt{k-1}+\sqrt{k-t-1}}= \nonumber \\ 
  =\frac{2t\sqrt{\ell-1}}{\sqrt{k-1}+\sqrt{k-t-1}} \leq \frac{s\sqrt{\ell-1}}{\sqrt{k-1}+\sqrt{k-t-1}} < \frac{s\sqrt{\ell-1}}{\sqrt{k-1}} < s, \nonumber
\end{eqnarray}
where the last inequality holds because $k > \ell$.	
\end{proof}

\begin{proposition}
	There is no singleton that is the only thin sub-box in both row and column that contain it.
\end{proposition}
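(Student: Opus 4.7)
The plan is to reach a contradiction with the minimality of ${\cal R}$ by stripping away the offending singleton together with the entire row and column that contain it. Suppose $\{(i,j)\}$ is such a singleton. My first step is to use the hypothesis to pin down the structure near $(i,j)$: every sub-box $A'\times B'\in{\cal R}$ other than the singleton that meets row $i$ must be \emph{thick}, i.e.\ satisfy $|A'|\ge 2$ and $|B'|\ge 2$; and because no sub-box other than the singleton may contain the cell $(i,j)$, each such sub-box also has $j\notin B'$. The symmetric statement holds for column $j$. In particular, aside from the singleton itself, no member of ${\cal R}$ has $A'=\{i\}$ or $B'=\{j\}$.

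Next I form a new family ${\cal R}'$ on the smaller box $(A\setminus\{i\})\times(B\setminus\{j\})$ by deleting the singleton and replacing every other $A'\times B'\in{\cal R}$ with $(A'\setminus\{i\})\times(B'\setminus\{j\})$. By the previous paragraph none of these shrunken pieces is empty, so ${\cal R}'$ is a bona fide partition with $|{\cal R}'|=|{\cal R}|-1$. To verify the $(k,\ell)$-piercing property, fix a row $i'\ne i$: the sub-boxes of ${\cal R}$ meeting row $i'$ are precisely those with $i'\in A'$; the singleton is not among them, and each such sub-box survives in ${\cal R}'$ with $i'$ still in its first coordinate set, so the new row $i'$ meets at least $k$ sub-boxes of ${\cal R}'$. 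The symmetric argument works for columns, hence ${\cal R}'$ retains the $(k,\ell)$-piercing property and is a strictly smaller counter-example.

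The only real worry, and the main thing I need to dispose of, is whether $(A\setminus\{i\})\times(B\setminus\{j\})$ is still a genuine two-dimensional discrete box. If $|A|=2$ then every column contains only two cells, so $\ell=2$ and the two cells of each column must lie in distinct sub-boxes; this forces every sub-box of ${\cal R}$ to be horizontally thin, and then a singleton in row $i$ cannot possibly be the unique thin sub-box of that row. The case $|B|=2$ is excluded symmetrically, so the degenerate situation never arises and the contradiction with the minimality of ${\cal R}$ goes through.
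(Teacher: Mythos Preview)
Your argument is correct and follows exactly the paper's approach: delete row $i$ and column $j$, observe that only the singleton disappears, and obtain a smaller counter-example with the same $(k,\ell)$-piercing property. You simply spell out the details the paper leaves implicit---in particular the verification that no other sub-box becomes empty and the treatment of the degenerate cases $|A|=2$ or $|B|=2$.
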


\begin{proof}
	Suppose for contradiction that $\cal R$ has a singleton 
	$\{(i,j)\}$ that is the only thin sub-box in Row $i$ and Column $j$. 
	If we remove Row $i$ and Column $j$, that is, replace $A$ with 
	$A \setminus \{i\}$, replace $B$ with $B \setminus \{j\}$ and change the sub-boxes in $\cal R$ accordingly, then we decrease the number of sub-boxes by exactly one since $\{(i,j)\}$ is the only thin sub-box in the Row $i$ and Column $j$.
	However, this implies that the new partition still has the $(k,\ell)$-piercing property
	which contradicts the minimality of $\cal R$.
\end{proof}

In summary, we may assume that $\cal R$ has the following properties: (1)~Every row (resp., column) contains a thin sub-box; (2)~if a row (resp., column) contains several thin sub-boxes then: all of them but one are singletons and every column (resp., row) contains exactly one thin sub-box; and (3)~there is no singleton which is the only thin sub-box both in its row and in its column.

Next we associate every row and every column with a unique thin sub-box that is contained in that row or column as follows.
If a row or a column contains a non-singleton thin sub-box, then we assign this sub-box to that row or column.
If Row $i$ contains a singleton $\{(i,j)\}$ and no other thin sub-box, then by the properties above, Column $j$ must contain a non-singleton thin sub-box which is associated to it. Therefore, we can assign the singleton $\{(i,j)\}$ to Row $i$.
Similarly, if Column $j$ contains only one thin sub-box which is a singleton, then we can assign this singleton to Column $j$. See Figure~\ref{fig:x_i} for an example.
\begin{figure}[t]
	\centering
	\includegraphics[width=6cm]{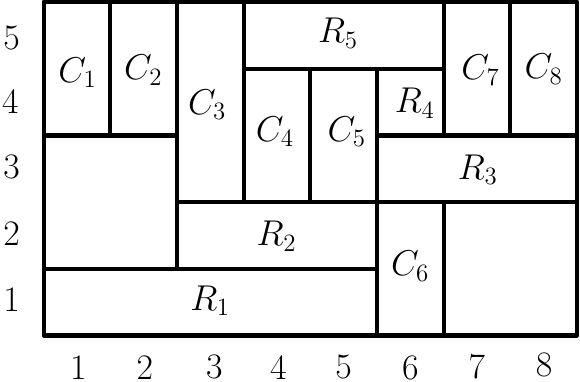}
	\caption{A (geometric) partition. Row $i$ is associated with $R_i$. Column $j$ is associated with $C_j$. The number of columns whose associated thin sub-box is not intersected by Row $2$ is $x_3=5$ (Columns 1,2,6,7,8). The number of rows that do not intersect $C_7$ is $y_7=3$ (Rows 1--3). The number of non-thin sub-boxes that Row 2 intersects is $t_2=2$.}
	\label{fig:x_i}
\end{figure}
We conclude that the number of thin sub-boxes is at least $m+n$, and hence, $|{\cal R}| \geq m+n$ (recall that $A=[m]$ and $B=[n]$).

\begin{proposition}\label{prop:assoc}
Suppose that Row $i$ intersects a vertically thin sub-box $R$ that is contained in Column $j$.
Then either $R$ is associated with Column $j$ or $R$ is a singleton which is associated with Row $i$. 
\end{proposition}

\begin{proof}
Suppose that $R$ is not associated with Column $j$.
Then it must be a singleton by Property~(2) mentioned above and the way vertically thin boxes are associated with columns.
Furthermore, $R$ is the only thin box in Row $i$ and is associated with it.  
\end{proof}

For every $i \in [m]$ let $x_i$ denote the number of columns whose associated thin sub-boxes are not intersected by Row $i$. 
Let $t_i$ denote the number of non-thin sub-boxes that intersect Row $i$.
For every $j \in [n]$ denote by $y_j$ the number of rows that do not intersect the thin sub-box that is associated with Column $j$. 
See Figure~\ref{fig:x_i} for an example.

Since the $x_i$'s and the $y_j$'s both count pairs of a row and column whose vertically thin sub-box is not intersected by that row, by double counting we get:
\begin{equation}\label{eq:double-counting}
	\sum_{i=1}^m x_i = \sum_{j=1}^n y_j.
\end{equation}

For a sub-box $S = A' \times B'$ we denote $a_{S} = |A'|$ and $b_{S} = |B'|$. Thus,
\begin{equation}\label{eq:sum-t_i}
\sum_{i=1}^m t_i = \sum_{S \in {\cal R} \mid a_{S},b_{S} \geq 2} a_{S}.
\end{equation}

Consider Column $j$ and let $R$ be the vertically thin sub-box that is associated with it. Apart from $R$ there are at least $(\ell-1)$ other sub-boxes that intersect Column $j$.
Each such sub-box $S$ is a witness for $a_{S}$ rows that do not intersect $R$.
Hence, 

\begin{equation}\label{eq:aS}
\displaystyle{y_{j} \geq \ell-1 + \sum_{S \mid S \neq R, \textrm{~~$S$ intersects column 
$j$}}(a_{S}-1).}
\end{equation}

Notice that such a box $S$ contributes $a_{S}-1$ to the right
hand side of (\ref{eq:aS}) for exactly $b_{S}$ different columns $j$.
Therefore by summing over all the columns we have:
\begin{equation}\label{eq:sum-y_j}
\sum_{j=1}^n y_j \geq n(\ell-1) + \sum_{S \in {\cal R} \mid a_{S},b_{S} \geq 2} (a_{S}-1)b_{S}.
\end{equation}

Consider Row $i$ and let us try to bound from below the number $n$ of
columns in $A \times B$.
Row $i$ intersects the horizontally thin sub-box assigned to it and another $t_{i}$ sub-boxes that are not vertically thin in their 
column. Therefore, because Row $i$ intersects at least $k$ sub-boxes in $\R$, 
it must intersect at least $k-(t_{i}+1)$ sub-boxes each of which is vertically
thin in its column. 
By Proposition~\ref{prop:assoc} each of these vertically thin sub-boxes is associated with the column it belongs to. 
In addition there are $x_{i}$ columns whose vertically thin sub-boxes are not intersected by Row $i$. We may therefore conclude that
$n \geq x_{i}+k-1-t_{i}$. 

Summing over all rows and using~(\ref{eq:double-counting}), (\ref{eq:sum-t_i}) and (\ref{eq:sum-y_j}) we have
\begin{eqnarray}
	n &\geq& \frac{1}{m} \sum_{i=1}^m \left( x_i+k-1-t_i\right) = k-1+\frac{1}{m} \left(\sum_{j=1}^n y_j - \sum_{i=1}^m t_i\right) \nonumber \\
	&\geq& k-1+\frac{n}{m}(\ell-1) +\frac{1}{m}\sum_{S \in {\cal R} \mid a_{S},b_{S} \geq 2} \left((a_{S}-1)b_{S}-a_{S}\right) \nonumber \\
	&\geq& k-1+\frac{n}{m}(\ell-1), \label{eq:n}
\end{eqnarray}

where the last inequality holds since $(a_{S}-1)b_{S} \geq a_{S}$ for $a_{S}, b_{S} \geq 2$.
By symmetry, we get that $m \geq \ell-1+\frac{m}{n}(k-1)$.
Combining this with (\ref{eq:n}) we obtain 
\begin{eqnarray}\label{eq:n+m}
|{\cal R}| \geq n+m &\geq& (k-1) + (\ell-1) + \frac{n}{m}(\ell-1) + \frac{m}{n}(k-1) \nonumber \\
&\geq& (k-1) + (\ell-1) + 2\sqrt{(k-1)(\ell-1)}, \nonumber 
\end{eqnarray}

where the last inequality follows by observing that $x^2+y^2 \ge 2xy$ for every $x$ and $y$ and setting $x=\sqrt{\frac{n}{m}(\ell-1)}$ and $y=\sqrt{\frac{m}{n}(k-1)}$
(equality is attained when $\frac{m}{n}=\frac{\sqrt{\ell-1}}{\sqrt{k-1}}$).

Therefore, $|\R| \geq (k-1)+(\ell-1)+\left\lceil 2\sqrt{(k-1)(\ell-1)} \right\rceil$.
This leads to a contradiction and thus completes the proof of the first part of Theorem~\ref{thm:main}.

%\bigskip

\paragraph{Remark.}
%We remark that inequality (\ref{eq:n}) can be made sharper if $k$ and $\ell$ do not necessarily satisfy certain number theoretical conditions. 
Inequality~(\ref{eq:n}) can be refined to $n \geq k-1+\lceil \frac{n}{m}(\ell-1) \rceil$ and similarly $m \geq \ell-1+\lceil \frac{m}{n}(k-1) \rceil$.
From here we get 
$$
|\R| \geq m+n \geq (k-1) + (\ell-1) + 
\min_{m,n}\left\{\left\lceil \frac{n}{m}(\ell-1) \right\rceil + \left\lceil \frac{m}{n}(k-1) \right\rceil\right\}.
$$

In some cases this bound is better (by one additive unit) than $(k-1)+(\ell-1)+\left\lceil 2\sqrt{(k-1)(\ell-1)} \right\rceil$ and matches the upper bound construction described below.
Still, in other cases even this refined analysis does not match the upper bound construction. 

%The reader can check (although it takes a while) that this bound is at most one unit smaller than 
%$(k-1)+(\ell-1)+2 \lceil \sqrt{(k-1)(\ell-1)} \rceil$.
%In the next part of the paper we provide for every $k$ and $\ell$, greater than or equal to $2$,  a construction of a partition with the $(k, \ell)$-piercing property and exactly $(k-1)+(\ell-1)+2 \lceil \sqrt{(k-1)(\ell-1)} \rceil$ parts.

\subsection{The upper bound construction}
\label{sec:upper}

Suppose without loss of generality that $k \ge \ell \ge 2$.
%Consider first the case $\ell=2$ and set $K=\sqrt{k-1}$. 
%Let $A=\{1,2,\ldots,\ceil{K}+1\}$ and $B=\{1,2,\ldots,K^2+\ceil{K}\}$.
%Let $B_1,B_2,\ldots,B_{\ceil{K}+1}$ be $\ceil{K}$-subsets of $B$ that together cover $B$ (clearly there %are such subsets since $\ceil{K}(\ceil{K}+1) \ge K^2+\ceil{K}=|B|$).
%Define the horizontally thin sub-boxes $\{i\} \times B_i$, for $i=1,2,\ldots,\ceil{K}+1$, and let $H$ %denote their union.
%Now define the vertically thin sub-boxes
%$\{i \in A \mid (i,j) \notin H \} \times \{j\}$, for $j=1,2,\ldots,K^2+\ceil{K}$. 
%It is not hard to verify that these $K^2+1+2\ceil{K}=(k-1)+(\ell-1)+2\left\lceil \sqrt{(k-1)(\ell-1)} %\right\rceil$ sub-boxes are a partition of $A \times B$ with the $(k,\ell)$-piercing property. 
%
%Suppose now that $\ell \ge 3$. 
We first describe a construction 
of a partition that meets the lower bound of 
$(k-1)+(\ell-1)+2\sqrt{(k-1)(\ell-1)}$ such that $\sqrt{(k-1)(\ell-1)}$ is an integer (in particular this includes the case $k = \ell$).
For such $k$ and $\ell$ the analysis of the construction is rather simple.
Later we describe how to modify the construction in a simple way 
for any $k \ge \ell \ge 2$.

It would be convenient to set $L=\sqrt{\ell-1}$ and $K=\sqrt{k-1}$ and to describe the construction geometrically.
Consider an axis-parallel rectangle $R$ 
whose bottom-left corner is at the origin,
whose width is $K^2+KL$ and whose height is $L^2+KL$.
Let $s_1$ be the line-segment whose endpoints are $(KL,0)$ and $(K^2+KL,KL)$.
Let $s_2$ be the line-segment whose endpoints are $(0,0)$ and $(K^2+KL,L^2+KL)$.
Let $s_3$ be the line-segment whose endpoints are $(0,KL)$ and $(KL,L^2+KL)$.
Thus, the slopes of $s_1$, $s_2$, and $s_3$ are $\frac{KL}{K^2}$, $\frac{L^2+KL}{K^2+KL}$ and $\frac{L^2}{KL}$ respectively, 
and therefore these three line segments are parallel.
Color the part of $R$ above $s_3$ light gray and do the same for 
the part of $R$ below $s_2$ and above $s_1$.
The remaining parts of $R$ we color dark gray 
(see Figure~\ref{fig:construction-perfect}(a) for an example).
\begin{figure}[t]
	\centering
	\subfloat[The initial coloring by light and dark gray.]{\includegraphics[width= 7cm]{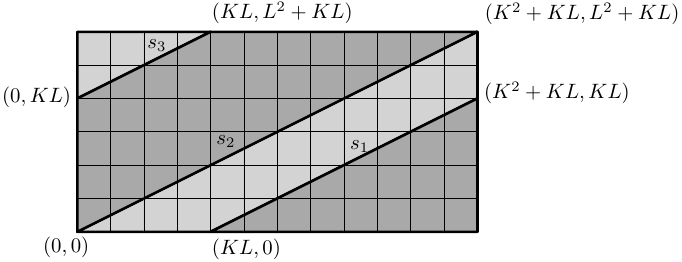}}
	\hspace{5mm}
	\subfloat[The partition into light gray horizontally thin sub-boxes and dark gray vertically thin sub-boxes.]{\includegraphics[width= 7cm]{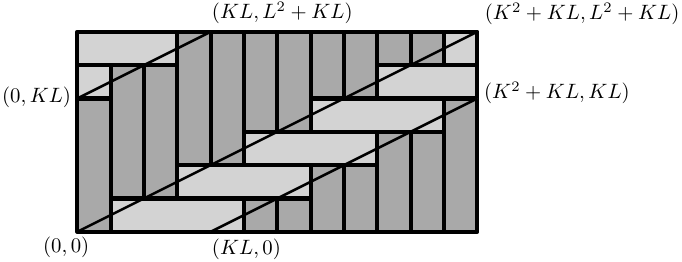}}
	\caption{The upper bound construction for $k=K^2+1=9$ and $\ell=L^2+1=3$.}
	\label{fig:construction-perfect}		
\end{figure}

The rectangle $R$ can be naturally partitioned into $(K^2+KL)\times(L^2+KL)$ unit squares, $\{\square_{i,j} \mid 0 \leq i <K^2+KL,~~0 \leq j <L^2+KL\}$, 
where $\square_{i,j}$ denotes the unit square whose bottom-left corner is at $(i,j)$. 
We color each of these unit squares either by light gray or by dark gray
according to the dominant color within that unit square in the initial 
coloring of the rectangle $R$ (see Figure~\ref{fig:construction-perfect}(b)). 
In case of a tie, that is, when some segment $s_i$ splits a unit square into 
two parts of equal area, we use the color of the part above $s_i$ 
(we note that because $KL \ge 2$, it is not possible that the same unit square is crossed by more than one of the segments $s_1$, $s_2$, and $s_3$).

The suggested coloring of the unit squares within the rectangle $R$
induce in a very natural way a partition of a combinatorial box
of dimensions $(K^2+KL)\times(L^2+KL)$ into $K^2+KL$ vertically-thin sub-boxes
and $L^2+KL$ horizontally-thin sub-boxes in the following way.
Combine the light gray squares within each `row' of unit squares 
into a horizontally thin sub-box and combine the dark gray squares within 
each `column' of unit squares into a vertically thin sub-box.
Note that this construction can be realized geometrically by rectangles drawn 
on a torus. Notice also that in this construction we interchange the roles
of rows and columns as defined in the introduction.
The reason of course is because when we fix the first coordinate
in the Euclidean plane we get a geometrically vertical line while
combinatorially we are used to thinking about a row as being horizontal. 
This is similar to the confusion when referring to the 
$(i,j)$ entry in a matrix to be in the $i$'th row and the $j$'th column,
as opposed to the way we think about the point with coordinates $(i,j)$
in the plane.
% Eyal: I find this explanation more confusing than the original definition. In any case I think we should define formally the sets and boxes so I suggest to roll back to:
% That is, set $A=\{0,1,\ldots,L^2+KL-1\}$ and $B=\{0,1,\ldots,K^2+KL-1\}$; for each $i=0,1,\ldots,L^2+KL-1$ define the horizontally thin sub-box $\{i\} \times \{j \in \{0,1,\ldots,K^2+KL-1\} \mid  \square_{j,i} \textrm{ is light gray} \}$; and for each $j=0,1,\ldots,K^2+KL-1$ define the vertically thin sub-box $\{i \in \{0,1,\ldots,L^2+KL-1\} \mid \square_{j,i} \textrm{ is dark gray} \} \times \{j\}$.

We claim that there are exactly $KL$ light
gray unit squares in every `row' of $R$ and there are exactly $KL$
dark gray unit squares in every `column' of $R$. This will follow from 
the following simple geometric observation.

\begin{claim}\label{claim:geom}
Let $P$ be a parallelogram of area $g \ge 1$ and height $1$ 
whose vertices are $(0,0)$,
$(g,0)$, $(t,1)$, and $(t+g,1)$ for some $0 < t \leq g$
(in other words we assume here that $P$ has two horizontal sides
whose projections on the $x$-axis overlap).
Let $U$ be an axes-parallel unit square whose center is $(c,\frac{1}{2})$. 
Then the area of $U \cap P$ is greater than $\frac{1}{2}$ if and only if 
$\frac{t}{2} < c < g+\frac{t}{2}$.
\end{claim}

\noindent {\bf Proof.} 
Notice that the area of $U \cap P$ is unimodal 
(increasing and then decreasing) in $c$. For $c=\frac{t}{2}$ and for 
$c=g+\frac{t}{2}$ the 
area of $U \cap P$ is precisely $\frac{1}{2}$ (here we use the fact that 
$g \geq 1$ and that the projections of the two horizontal side of $P$ on the 
$x$-axis overlap).
We leave it to the reader to verify the details.
\bbox

Consider now any row of unit squares in $R$. In the initial
coloring of $R$ the light gray area in each row is, up to a cyclic shift and a translation, 
a parallelogram
$P$ that satisfies the conditions in Claim \ref{claim:geom}.
To verify that indeed the projections of the horizontal edges of $P$
on the $x$-axis overlap one has just to verify that the slope of the line segment
$s_{1}$ is greater than or equal to $\frac{1}{KL}$. This is indeed true as the 
slope of $s_{1}$ is equal to $\frac{L}{K}$ and $L \geq 1$.
It follows now from Claim \ref{claim:geom} and from the assumption that 
$KL$ is an integer that there are precisely $KL$ light
gray unit squares in every 'row' of $R$. In precisely the same way one concludes
that there are precisely $KL$
dark gray unit squares in every 'column' of $R$. Here we apply
Claim \ref{claim:geom} on the vertical parallelograms in each column. We need to verify that the projections of the two vertical edges on the $y$-axis overlap.
To this end one has to verify that 
the slope of $s_{1}$ is not greater than $KL$. Once again this is true
because the slope of $s_{1}$ is equal to $\frac{L}{K}$ and $K \geq 1$.

Having shown that there are exactly $KL$ light
gray unit squares in every 'row' of $R$ and there are exactly $KL$
dark gray unit squares in every 'column' of $R$ we conclude
that there are $K^2+KL$ vertically-thin sub-boxes (one in each 'column')
and $L^2+KL$ horizontally-thin sub-boxes (one in each 'row') 
in our partition. Therefore, it is a partition of a discrete box into
$K^2+KL+L^2+KL=(k-1)+(\ell-1)+2\sqrt{(k-1)(\ell-1)}$ sub-boxes.

It remains to show that our partition 
has the $(K^2+1,L^2+1)$-piercing property.
Indeed, every row contains one horizontally-thin sub-box 
consisting of $KL$ unit squares and therefore the remaining
$K^2$ unit squares in this row belong to pairwise distinct vertically-thin
sub-boxes. Altogether every row intersects $K^2+1$ sub-boxes of our partition.
Similarly, every column intersects $L^2+1$ 
sub-boxes of our partition, one vertically-thin box and $L^2$ 
horizontally-thin sub-boxes, as desired.

\bigskip

\noindent {\bf The construction for general $k$ and $\ell$.}

Suppose that $\sqrt{(k-1)(\ell-1)}$ is not an integer and
assume without loss of generality that $k > \ell$.
As before, set $K=\sqrt{k-1}$ and $L=\sqrt{\ell-1}$.
We modify the construction above as follows.
Consider the axis-parallel rectangle $R$ 
whose bottom-left corner is at the origin,
whose width is $K^2+\lceil KL \rceil$ and whose height is $L^2+\ceil{KL}$.
Let $s_1$ be the line-segment whose endpoints are $(\ceil{KL},0)$ and $(K^2+ \ceil{KL},\ceil{KL})$.
Let $s_2$ be the line-segment whose endpoints are $(0,0)$ and $(\frac{K^2(L^2+\ceil{KL})}{\ceil{KL}},L^2+\ceil{KL})$.
Let $s_3$ be the line-segment whose endpoints are $(0,\ceil{KL})$ and $(\frac{K^2L^2}{\ceil{KL}},L^2+\ceil{KL})$.
Thus, $s_1$, $s_2$, and $s_3$ are parallel (see Figure~\ref{fig:construction-imperfect}).
\begin{figure}[t]
	\centering
	\includegraphics[width= 8cm]{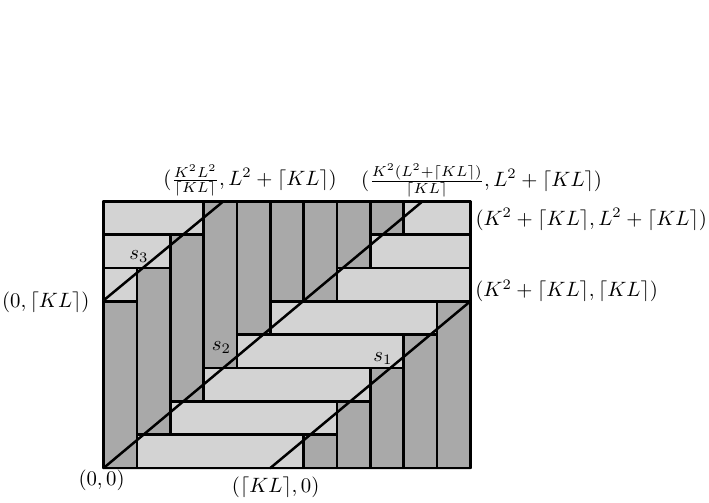}
%	\subfloat[The partition into sub-boxes.]{\includegraphics[width= 7cm]{construction4.pdf}}
%	\hspace{5mm}
%	\subfloat[Every row contains a horizontally thin light gray sub-box consisting of exactly $\ceil{KL}$ unit squares. Every column contains a vertically thin dark gray sub-box consisting of at most $\ceil{KL}$ unit squares]{\includegraphics[width= 7cm]{construction5.pdf}}
	\caption{The upper bound construction for $k=K^2+1=7$ and $\ell=L^2+1=4$.}
	\label{fig:construction-imperfect}		
\end{figure}

The three line segments $s_{1}, s_{2}$ and $s_{3}$ 
partition $R$ into four regions that we color
by light gray and dark gray as before. Namely, we color by light gray the region
in $R$ above $s_{3}$ and the region in $R$ bounded between $s_{1}$ and $s_{2}$.
We color by dark gray the region in $R$ below $s_{1}$ and the region in $R$
bounded between $s_{2}$ and $s_{3}$.

As before, $R$ can be naturally partitioned into 
$(K^2+\ceil{KL})\times(L^2+\ceil{KL})$ unit squares, 
$\{\square_{i,j} \mid 0 \leq i <K^2+\ceil{KL},~~0 \leq j <L^2+\ceil{KL}\}$, 
where $\square_{i,j}$ denotes the unit square whose bottom-left corner is at $(i,j)$. 
We color each of these unit squares either by light gray or by dark gray
in precisely the same manner as we did before, that is, 
according to the dominant color in that unit square in the initial coloring
of the rectangle $R$.

The coloring of the unit squares in $R$ by light gray and dark gray induces
in a natural way a partition of the combinatorial box
of dimensions $(K^2+\ceil{KL}) \times (L^2+\ceil{KL})$ into 
$K^2+\ceil{KL}$ vertically-thin sub-boxes and $L^2+\ceil{KL}$ horizontally-thin
sub-boxes. As before, we combine the light gray squares within each `row' of 
unit squares 
into a horizontally thin sub-box and combine the dark gray squares within 
each `column' of unit squares into a vertically thin sub-box.
  
Here again the region colored light gray in every `row' of $R$ is, 
up to a cyclic shift and a translation, 
a parallelogram
$P$ that satisfies the conditions in Claim \ref{claim:geom}.
To verify that indeed the projections of the horizontal edges of $P$
on the $x$-axis overlap, one has just to verify that the slope of the line segment
$s_{1}$ is greater than or equal to $\frac{1}{\ceil{KL}}$. This is indeed true as the 
slope of $s_{1}$ is equal to $\frac{\ceil{KL}}{K^2}$ and $L \geq 1$.
It follows now from Claim \ref{claim:geom} and from the fact that 
$\ceil{KL}$ is an integer that there are precisely $\ceil{KL}$ light
gray unit squares in every `row' of $R$
(see Figure~\ref{fig:construction-imperfect}).
Therefore, every row contains one light gray sub-box and the rest of the
$K^2$ dark gray unit squares belong to pairwise distinct vertically thin 
sub-boxes. Altogether, every row intersects precisely $K^2+1$ sub-boxes
in our partition. 

Let us now consider the number of sub-boxes each column intersects.
We wish to show that this is at least $L^2+1$. 
We notice that the left end of $s_{1}$ is to the right of the right end of $s_{3}$. This is because $\frac{K^2L^2}{\ceil{KL}}  < \ceil{KL}$ (since $KL$ is not an integer).
It follows (and we leave the details to the reader) that the dark gray region within every vertical strip of width $1$
in $R$ is, up to a cyclic vertical shift, contained in a parallelogram $P$ of area $\ceil{KL}$ in our initial coloring of the rectangle 
$R$ (see Figure~\ref{fig:construction-imperfect}).
We claim that $P$ satisfies the conditions of Claim
\ref{claim:geom} in the sense that the projections of the two vertical 
edges of $P$ on the $y$-axis overlap. In order for this to be true
we need to verify that the slope of $s_{1}$ is not greater than $\ceil{KL}$.
This is indeed true because the 
slope of $s_{1}$ is equal to $\frac{\ceil{KL}}{K^2}$ and $K \geq 1$.

We can now conclude from Claim \ref{claim:geom} that 
every column of unit squares contains a vertically thin sub-box that consists 
of 
at most $\ceil{KL}$ unit squares. Consequently, every column 
of unit squares intersects at least $L^2$ horizontally thin sub-boxes.
We will conclude the proof once we show that every column of unit squares in 
$R$ 
contains at least one dark gray unit square. Then altogether 
every column of unit squares in $R$ intersects $L^2+1$ sub-boxes
in our partition, as desired.

In order to show that every column of unit squares in 
$R$ contains at least one dark gray unit square consider a 
vertical line $m$ that crosses $R$. Observe that the set of the light gray 
points on $m$ in the initial coloring of $R$ constitute, up to a vertical 
cyclic shift, a segment
of length at most $\ceil{KL}$ times the slope of $s_{1}$. 
The slope of $s_{1}$ is equal to $\frac{\ceil{KL}}{K^2}$ and therefore
the length of the light gray segment on $m$ is at most
$\frac{\ceil{KL}^2}{K^2}$. From here we conclude that the light gray
region in each column of $R$ in the initial coloring of $R$ is contained
in a parallelogram $P$ of area at most $\frac{\ceil{KL}^2}{K^2}$.
One can check that $P$ satisfies the conditions of Claim \ref{claim:geom}.
It follows now from Claim \ref{claim:geom} that there are at most
$\ceil{\frac{\ceil{KL}^2}{K^2}} \leq \frac{\ceil{KL}^2}{K^2}+1$ light
gray unit squares in each column of $R$.
Recall that there are $L^2+\ceil{KL}$ unit squares in every column of $R$. 
Hence it is enough to show that $L^2+\ceil{KL}-(\frac{(\ceil{KL})^2}{K^2}+1)$
is greater than or equal to $1$. Set $\alpha=\ceil{KL}-KL$. Then

\begin{eqnarray}
L^2+\ceil{KL}-\left(\frac{(\ceil{KL})^2}{K^2}+1\right) & = & 
(KL+\alpha)-1-\frac{(KL+\alpha)^2-(KL)^2}{K^2}\nonumber\\ 
& \geq & KL+\alpha-1-\frac{2\alpha KL+\alpha^2}{K^2}\nonumber\\
& \geq & KL-1-\frac{2\alpha KL}{K^2}\nonumber\\
& \geq & KL-3.
\end{eqnarray}

Hence we are done when $KL \geq 4$.

\paragraph{Acknowledgments}
We thank Ron Holzman for pointing out a mistake in an earlier version of this 
paper.
We also thank anonymous reviewers for their remarks that helped improving the presentation of the paper, in particular for a suggestion on simplifying the upper bound construction. 

\bibliographystyle{abbrv}

\end{document}